\definecolor{darkblue}{rgb}{0,0,.4}
\newcommand{\chipone}[2]{\rho\left({#1}|\!|{#2}\right)}
\newcommand{\chipones}[2]{\rho({#1}|\!|{#2})}
\newcommand{\scalarloss}{\ell}
\newcommand\footnoteref[1]{\protected@xdef\@thefnmark{\ref{#1}}\@footnotemark}
\begin{document}

\begin{center}
  {\Large A constrained risk inequality for general losses}\\
  \vspace{.3cm}
  \large{John C.\ Duchi\footnote{\label{footnote:support}
      Supported by NSF CAREER Award
      CCF-1553086 and ONR Young Investigator Award N00014-19-2288}
    ~~~~ Feng Ruan\footnoteref{footnote:support}} \\
  Stanford University \\
  April 2020
\end{center}

%% \title{An extension of the constrained risk inequality of Brown and Low}
%% \author{John Duchi ~~~ Feng Ruan \\
%%   Stanford University}

%% \maketitle

\begin{abstract}
  We provide a general constrained risk inequality that applies to arbitrary
  non-decreasing losses, extending a result of Brown and Low
  [\emph{Ann.~Stat.~1996}]. Given two distributions $P_0$ and $P_1$, we find
  a lower bound for the risk of estimating a parameter $\theta(P_1)$ under
  $P_1$ given an upper bound on the risk of estimating the parameter
  $\theta(P_0)$ under $P_0$. The inequality is a useful pedagogical tool, as
  its proof relies only on the Cauchy-Schwartz inequality, it applies to
  general losses, and it transparently gives risk lower bounds on
  super-efficient and adaptive estimators.
\end{abstract}

\section{Introduction}

In the theory of optimality for statistical estimators, we wish to develop
the tightest lower bounds on estimation error possible. With this in mind,
three desiderata make a completely satisfying lower bound: it is
distribution specific, in the sense that the lower bound is a function of
the specific distribution $P$ generating the data; the lower bound is
uniformly achievable, in that there exist estimators achieving the lower
bound uniformly over $P$ in a class $\mc{P}$ of distributions; and there is
a super-efficiency result, so that if an estimator $\what{\theta}$ achieves
better risk than that indicated by the lower bound at a particular
distribution $P_0$, there exist other distributions $P_1$ where the
estimator has worse risk than the bound.  While for problems of estimating
a three or higher-dimensional quantity,
the Stein phenomenon~\cite{Stein56} shows that satisfying all
three of these desiderata is impossible, in the case of estimation of a
real-valued functional $\theta(P)$ of a distribution $P$, one can often
develop such results. It is the purpose of this pedagogical note to show a
transparent proof of such lower bounds via a ``hardest one-dimensional
subproblem'' argument~\cite{Stein56a}. Our hope is that this perspective is
useful for explanation of the failures of super-efficient estimators,
such as the Hodges' estimator, which must achieve inflated error away from
points at which they are superefficient, or for researchers who wish to
simply develop lower bounds in functional estimation.

In classical one-parameter families of distributions, such as location
families or exponential families, the Fisher Information governs estimation
error in a way satisfying our three desiderata of locality, achievability,
and impossibility of super-efficiency, and in classical parametric problems,
no estimator can be super-efficient on more than a set of measure zero
points~\cite{LeCamYa00,VanDerVaart97,VanDerVaart98}. Similarly satisfying
results hold in other problems.  In the case of estimation of the value of a
convex function $f$ in white noise, for example, \citet{CaiLo15} provide
precisely such a result, characterizing a local modulus of continuity with
properties analogous to the Fisher information. For stochastic convex
optimization problems, \citet*{ChatterjeeDuLaZh16} give a computational
analogue of the Fisher Information that governs the difficulty of optimizing
the function.

Key to many of these results, and to understanding nonparametric functional
estimation more broadly, is the \emph{constrained risk inequality} of
\citet{BrownLo96}. \citeauthor{BrownLo96} develop a two-point inequality
that is especially well-suited to providing lower bounds for adaptive
nonparametric function estimation problems, and they also show that it gives
quantitative bounds on the mean-squared error of super-efficient estimators
for one-parameter problems, such as Gaussian mean estimation. Their work,
however, relies strongly on using the squared error loss---that is, the
quality of an estimator $\what{\theta}$ for a parameter $\theta$ is measured
by $\E[(\what{\theta} - \theta)^2]$.  In many applications, it is
interesting to evaluate the error in other metrics, such as absolute error
or the probability of deviation of the estimator $\what{\theta}$ away from
the parameter $\theta$ by more than a specified amount. We extend Brown and
Low's work~\cite{BrownLo96} by providing a constrained risk inequality that
applies to general (non-decreasing) losses. Our proof relies only on the
Cauchy--Schwarz inequality, so we can decouple the argument from the
particular choice of loss. There are more general results on lower bounds
that demonstrate tradeoffs must exist, such as Lepskii's results on
adaptivity in Gaussian white noise models~\cite{Lepskii90} or~\cite[Theorem
  6, App.~A1]{Tsybakov98}.  While (similar to~\cite{BrownLo96}) our approach
does not always provide sharp constants, the constrained risk inequality
allows us to provide finite sample lower bounds for estimation under general
losses, which brings us closer to the celebrated local asymptotic minimax
theorem of Le Cam and
H\'{a}jek~\cite[e.g.][Ch.~8.7]{LeCamYa00,VanDerVaart98}.
To illustrate our results, we provide a applications to estimation of a
normal mean and certain efficient nonparametric estimation problems,
deferring technical proofs to Section~\ref{sec:proofs}.

\section{The constrained risk inequality}

We begin with our setting.  Let $P$ be a
distribution on a sample space $\mc{Z}$, and let $\theta(P) \in \R^k$ be a
parameter of interest. For predicting a point $v \in \R^k$
when the distribution is $P$, the estimator suffers loss
\begin{equation}
  \label{eqn:loss}
  L(v, P) \defeq \scalarloss(\ltwo{v - \theta(P)}),
\end{equation}
where $\scalarloss : \R_+ \to \R_+$ is a non-decreasing scalar
loss function.
For $Z \sim P$ and an estimator $\what{\theta}$ of
$\theta(P)$ based on $Z$,
the risk of $\what{\theta}$ is then
\begin{equation*}
  \risk(\what{\theta}, P)
  \defeq \E_P\left[L(\what{\theta}, P)\right]
  = \E_P\left[\scalarloss(\ltwos{\what{\theta}(Z) - \theta(P)})\right].
\end{equation*}
The result to come relies on the similarity of two distributions to one
another, and accordingly,
we define the $\chi^2$-affinity by
\begin{equation*}
  \chipone{P_1}{P_0}
  \defeq \dchi{P_1}{P_0} + 1
  = \int \frac{dP_1^2}{dP_0}
  = \E_0\left[\frac{dP_1^2}{dP_0^2}\right]
  = \E_1\left[\frac{dP_1}{dP_0}\right],
\end{equation*}
where $\E_0$ and $\E_1$ denote expectation under $P_0$ and $P_1$,
respectively.
With these definitions, we have the following theorem, which gives
a lower bound for the risk of the estimator $\what{\theta}$ on a distribution
$P_1$ given an upper bound for its risk under $P_0$.

\begin{theorem}
  \label{theorem:constrained-risk-convex}
  Assume $\scalarloss : \R_+ \to \R_+$ in the loss~\eqref{eqn:loss}
  is convex. 
  Let $\theta_0 = \theta(P_0)$ and $\theta_1 = \theta(P_1)$, and
  define the separation $\Delta = 2 \scalarloss(\half
  \ltwo{\theta_0 - \theta_1})$.  If the estimator $\what{\theta}$ satisfies
  $\risk(\what{\theta}, P_0) \le \delta$, then
  \begin{equation}
    \label{eqn:constrained-risk-convex}
    \risk(\what{\theta}, P_1)
    \ge 
    \hinge{\Delta^{1/2} - (\chipones{P_1}{P_0} \cdot \delta)^{1/2}}^2.
  \end{equation}
\end{theorem}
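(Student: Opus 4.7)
The plan is to combine the convexity of $\scalarloss$ with a change-of-measure via Cauchy--Schwarz, exactly along the lines the introduction advertises.

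The first step is a pointwise lower bound: since $\scalarloss$ is convex and non-decreasing and $\|\what{\theta} - \theta_0\| + \|\what{\theta} - \theta_1\| \ge \|\theta_0 - \theta_1\|$ by the triangle inequality, applying midpoint convexity and then monotonicity yields
\begin{equation*}
  \scalarloss(\ltwos{\what{\theta} - \theta_0}) + \scalarloss(\ltwos{\what{\theta} - \theta_1})
  \ge 2\,\scalarloss\!\left(\tfrac{1}{2}\|\theta_0 - \theta_1\|_2\right) = \Delta,
\end{equation*}
valid pointwise (for every realization of $Z$). This is the only step that uses convexity of the loss; everything after is measure-theoretic.

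Next, I would take square roots and apply the elementary inequality $\sqrt{a+b} \le \sqrt{a} + \sqrt{b}$ for $a,b \ge 0$ to get, pointwise,
\begin{equation*}
  \Delta^{1/2}
  \le \scalarloss(\ltwos{\what{\theta} - \theta_0})^{1/2}
    + \scalarloss(\ltwos{\what{\theta} - \theta_1})^{1/2}.
\end{equation*}
Taking $\E_1$ of both sides leaves the constant $\Delta^{1/2}$ on the left, and the right side splits into two expectations. For the second, Jensen (equivalently Cauchy--Schwarz with the constant~$1$) gives $\E_1[\scalarloss(\|\what\theta - \theta_1\|)^{1/2}] \le \risk(\what\theta, P_1)^{1/2}$. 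For the first, I change measure to $P_0$ and apply Cauchy--Schwarz:
\begin{equation*}
  \E_1\!\left[\scalarloss(\ltwos{\what{\theta} - \theta_0})^{1/2}\right]
  = \E_0\!\left[\frac{dP_1}{dP_0}\,\scalarloss(\ltwos{\what{\theta} - \theta_0})^{1/2}\right]
  \le \left(\E_0\!\left[\tfrac{dP_1^2}{dP_0^2}\right]\right)^{1/2}
      \left(\E_0[\scalarloss(\ltwos{\what\theta - \theta_0})]\right)^{1/2},
\end{equation*}
which is exactly $(\chipones{P_1}{P_0})^{1/2} \, \risk(\what\theta,P_0)^{1/2} \le (\chipones{P_1}{P_0}\,\delta)^{1/2}$ by the assumed risk bound.

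Combining these yields $\Delta^{1/2} - (\chipones{P_1}{P_0}\cdot \delta)^{1/2} \le \risk(\what\theta, P_1)^{1/2}$. Squaring (and invoking $\risk \ge 0$ to insert the positive-part when the left side is negative) gives the claimed bound~\eqref{eqn:constrained-risk-convex}. The only subtle point is making sure the chain of inequalities starts from a bound that is \emph{pointwise} before any expectation is taken---this is why taking square roots before averaging is essential; doing it in the reverse order would force us to control $\E_0[\scalarloss^2]$ rather than $\E_0[\scalarloss]$. I do not anticipate a genuine obstacle beyond correctly tracking the direction of the Cauchy--Schwarz applied to the likelihood ratio.
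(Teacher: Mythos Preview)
Your proof is correct and follows essentially the same architecture as the paper's: a pointwise lower bound $\sqrt{\scalarloss_0}+\sqrt{\scalarloss_1}\ge \Delta^{1/2}$, then $\E_1$, Jensen on the $\theta_1$-term, and Cauchy--Schwarz after a change of measure on the $\theta_0$-term. The one difference is that the paper first projects $\what\theta$ onto the segment $[\theta_0,\theta_1]$ so that the two distances sum \emph{exactly} to $\ltwo{\theta_0-\theta_1}$, whereas you obtain the same pointwise inequality more directly via the triangle inequality combined with monotonicity of $\scalarloss$; your route is slightly cleaner and avoids the projection step entirely.
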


A few corollaries are possible. The first applies to more general
(non-convex) loss functions.
\begin{corollary}
  \label{corollary:generic-loss}
  Let the conditions of Theorem~\ref{theorem:constrained-risk-convex} hold,
  except that $\scalarloss : \R_+ \to \R_+$ is an arbitrary non-decreasing
  function. Define $\Delta = \scalarloss(\half \ltwo{\theta_0 -
    \theta_1})$. If the estimator $\what{\theta}$ satisfies
  $\risk(\what{\theta}, P_0) \le \delta$, then
  \begin{equation*}
    % \label{eqn:constrained-risk-general}
    \risk(\what{\theta}, P_1)
    \ge \hinge{\Delta^{1/2} - (\chipone{P_1}{P_0} \delta)^{1/2}}^2.
  \end{equation*}
\end{corollary}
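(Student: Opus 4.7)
The plan is to reduce the general non-decreasing loss to a $0$-$1$ probability bound via Markov's inequality, and then to transfer that bound between $P_0$ and $P_1$ using Cauchy--Schwarz with the likelihood ratio, which is exactly the mechanism producing the $\chi^2$-affinity in Theorem~\ref{theorem:constrained-risk-convex}. Write $d = \ltwo{\theta_0 - \theta_1}$ and $A = \{\ltwos{\what{\theta} - \theta_0} \ge d/2\}$. Because $\scalarloss$ is non-decreasing and non-negative, $\scalarloss(t) \ge \scalarloss(d/2)\, \mathbf{1}\{t \ge d/2\}$ pointwise, so Markov's inequality converts the hypothesis $\risk(\what{\theta}, P_0) \le \delta$ into $P_0(A) \le \delta/\Delta$, and simultaneously gives the lower bound $\risk(\what{\theta}, P_1) \ge \Delta \cdot P_1(\ltwos{\what{\theta} - \theta_1} \ge d/2)$.

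Next I would combine the triangle inequality with a change of measure. The triangle inequality forces $A^c \subseteq \{\ltwos{\what{\theta} - \theta_1} \ge d/2\}$, hence $\risk(\what{\theta}, P_1) \ge \Delta \bigl(1 - P_1(A)\bigr)$. The main step---and the indicator-loss analogue of the argument proving Theorem~\ref{theorem:constrained-risk-convex}---is a Cauchy--Schwarz bound on $P_1(A)$ in terms of $P_0(A)$ through the likelihood ratio:
\begin{equation*}
  P_1(A) = \E_0\left[\frac{dP_1}{dP_0}\, \mathbf{1}_A\right]
  \le \sqrt{\chipone{P_1}{P_0} \cdot P_0(A)}
  \le \sqrt{\chipone{P_1}{P_0} \cdot \delta/\Delta}.
\end{equation*}
Chaining the two displays yields $\risk(\what{\theta}, P_1) \ge \hinge{\Delta - \sqrt{\Delta \cdot \chipone{P_1}{P_0} \cdot \delta}}$.

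Finally, I would rewrite this in the stated form. The factorization $\Delta - \sqrt{\Delta \cdot \chipone{P_1}{P_0}\delta} = \sqrt{\Delta}\bigl(\sqrt{\Delta} - \sqrt{\chipone{P_1}{P_0}\delta}\bigr)$ shows that whenever the parenthesized factor is nonnegative it dominates $\bigl(\sqrt{\Delta} - \sqrt{\chipone{P_1}{P_0}\delta}\bigr)^2$, and otherwise both hinges vanish, so the claimed inequality follows. I expect no serious obstacle here: convexity of $\scalarloss$ has been replaced by the Markov reduction to an indicator, and the only cost is the factor of $2$ in $\Delta$ that separates the corollary from Theorem~\ref{theorem:constrained-risk-convex}. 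The one subtlety worth double-checking is that the Cauchy--Schwarz step is applied on the set $A$ (not on its complement), which is exactly what aligns the direction of the $\chi^2$-affinity with the direction of the risk lower bound.
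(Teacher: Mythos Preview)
Your proof is correct, but it follows a genuinely different route from the paper's. The paper modifies the proof of Theorem~\ref{theorem:constrained-risk-convex} in a single place: it keeps the projection of $\what{\theta}$ onto the segment $[\theta_0,\theta_1]$ and works directly with $\sqrt{\scalarloss}$, replacing the convexity-based majorization~\eqref{eqn:majorization-convex} by the monotonicity bound $\scalarloss(t\ltwo{\theta_0-\theta_1}) + \scalarloss((1-t)\ltwo{\theta_0-\theta_1}) \ge \scalarloss(\tfrac12\ltwo{\theta_0-\theta_1})$, and then applies Cauchy--Schwarz to $\E_1[\sqrt{\scalarloss(\ltwos{\what{\theta}-\theta_0})}]$ exactly as before. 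You instead collapse the general loss to an indicator via $\scalarloss(t) \ge \Delta\,\mathbf{1}\{t \ge d/2\}$ and run the change-of-measure argument on the indicator; the triangle inequality plays the role that the projection plus majorization play in the paper. Your route avoids the projection entirely and in fact yields the slightly sharper intermediate bound $\risk(\what{\theta},P_1) \ge \hinge{\Delta - \sqrt{\Delta\,\chipone{P_1}{P_0}\,\delta}}$ before you weaken it to the stated square; the paper's route has the virtue of being a one-line modification of the theorem's proof. The one small edge case you should note explicitly is $\Delta = 0$, where the claimed bound is trivial and your division by $\Delta$ is unnecessary.
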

%% \noindent
%% As in \citet{BrownLo96}, if we use that $(a - b)^2 = a^2 -2ab + b^2
%% \ge a^2 \hinge{1 - 2b / a}$ for $a, b \ge 0$, under the conditions and
%% notation of either Theorem~\ref{theorem:constrained-risk-convex} or
%% Corollary~\ref{corollary:generic-loss} we also have that
%% \begin{equation}
%%   \label{eqn:constrained-risk-v2}
%%   \risk(\what{\theta}, P_1)
%%   \ge \Delta \hinge{1 - 2 \frac{\sqrt{\delta\chipones{P_1}{P_0}}}{
%%         \sqrt{\Delta}}}.
%% \end{equation}
\noindent
We can also give a corollary with slightly sharper constants, which applies
to the case that we measure error using a power loss.
\begin{corollary}
  \label{corollary:power-losses}
  In addition to
  the conditions of Theorem~\ref{theorem:constrained-risk-convex},
  assume $\scalarloss(t) = t^k$ for some $k \in (0, \infty)$,
  and define $\Delta = \ltwo{\theta_0 - \theta_1}$.
  If the estimator $\what{\theta}$ satisfies
  $\risk(\what{\theta}, P_0) \le \delta^k$, then
  \begin{equation}
    \label{eqn:constrained-power-loss}
    \risk(\what{\theta}, P_1)
    \ge \begin{cases}
      \hinge{\Delta^{k/2} - (\chipones{P_1}{P_0} \cdot \delta^k)^{1/2}}^2
      & \mbox{if}~ 0 < k \le 2 \\
      \hinge{\Delta - (\chipones{P_1}{P_0} \cdot \delta^2)^{1/2}}^k
      & \mbox{if~} k \ge 2.
    \end{cases}
  \end{equation}
\end{corollary}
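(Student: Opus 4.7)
My plan is to mimic the Cauchy--Schwarz based argument underlying Theorem~\ref{theorem:constrained-risk-convex}, but to exploit the power structure $\scalarloss(t)=t^k$ to avoid the wasteful factor $2\scalarloss(\tfrac12\ltwo{\theta_0-\theta_1}) = 2^{1-k}\Delta^k$ coming from Jensen's inequality on $\scalarloss$. The two regimes $k\le 2$ and $k\ge 2$ correspond to the exponent at which it is cheapest to apply the pivotal inequality ``$\E_1[f]\le\sqrt{\chipones{P_1}{P_0}\cdot\E_0[f^2]}$,'' which is the only way the chi-square affinity enters.

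For the case $0<k\le 2$, let $a=\ltwo{\what\theta-\theta_0}$ and $b=\ltwo{\what\theta-\theta_1}$. Since $k/2\le 1$, the map $t\mapsto t^{k/2}$ is subadditive, so
\begin{equation*}
  a^{k/2}+b^{k/2}\ge (a+b)^{k/2}\ge \ltwo{\theta_0-\theta_1}^{k/2}=\Delta^{k/2}
\end{equation*}
by the triangle inequality. Taking $\E_1$ and rearranging yields
\begin{equation*}
  \E_1[b^{k/2}]\ge \Delta^{k/2}-\E_1[a^{k/2}].
\end{equation*}
I then apply Cauchy--Schwarz in the form $\E_1[a^{k/2}]=\E_0[a^{k/2}\,dP_1/dP_0]\le \sqrt{\chipones{P_1}{P_0}\cdot\E_0[a^k]}\le \sqrt{\chipones{P_1}{P_0}\cdot\delta^k}$, and bound the left-hand side by $\E_1[b^{k/2}]\le \sqrt{\E_1[b^k]}=\sqrt{\risk(\what\theta,P_1)}$ via Jensen (power-mean). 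Squaring and taking the positive part gives the advertised bound.

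For $k\ge 2$, I instead use the plain triangle inequality $a+b\ge \Delta$, take $\E_1$, and estimate $\E_1[a]\le \sqrt{\chipones{P_1}{P_0}\cdot\E_0[a^2]}$ by Cauchy--Schwarz. Power-mean gives $\E_0[a^2]\le(\E_0[a^k])^{2/k}\le \delta^2$, so $\E_1[b]\ge \Delta-\sqrt{\chipones{P_1}{P_0}\cdot\delta^2}$. Jensen applied to the convex map $t\mapsto t^k$ (valid since $k\ge 1$) then gives $\E_1[b^k]\ge (\E_1[b])_+^k$, which is the second branch of~\eqref{eqn:constrained-power-loss}.

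I do not anticipate a serious obstacle; the only subtlety is lining up the right exponent for the Cauchy--Schwarz step in each regime (exponent $k/2$ when $k\le 2$ so that the square inside Cauchy--Schwarz matches the given risk bound $\delta^k$, and exponent $1$ when $k\ge 2$ so that the square matches $\E_0[a^2]$, which is then controlled through power-mean by $\delta^2$). The transition at $k=2$ is seamless: both branches reduce to $\hinge{\Delta-\sqrt{\chipones{P_1}{P_0}\delta^2}}^2$.
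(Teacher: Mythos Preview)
Your argument is correct and shares the paper's core mechanism: a Cauchy--Schwarz change of measure $\E_1[f]\le\sqrt{\chipones{P_1}{P_0}\,\E_0[f^2]}$ applied at the exponent that makes the square match the available risk bound. The differences are in the supporting inequalities. For $0<k\le 2$, the paper first projects $\what\theta$ onto the segment $[\theta_0,\theta_1]$ so that $a=(1-t)\Delta$, $b=t\Delta$, and then uses $(1-t)^{k/2}+t^{k/2}\ge 1$; you instead invoke the triangle inequality $a+b\ge\Delta$ together with subadditivity of $t\mapsto t^{k/2}$, which gives the same pointwise bound $a^{k/2}+b^{k/2}\ge\Delta^{k/2}$ without ever projecting. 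For $k\ge 2$, the paper reduces to its $k=2$ case (so $\E_1[b^2]\ge\hinge{\Delta-\sqrt{\chipones{P_1}{P_0}\delta^2}}^2$) and then lifts via $\E_1[b^k]\ge(\E_1[b^2])^{k/2}$, whereas you work at the level of $\E_1[b]$ and lift via $\E_1[b^k]\ge(\E_1[b])^k$; both are the same Jensen/power-mean step applied at a different intermediate exponent. Your route is marginally more self-contained since it does not rely on the projection lemma from the proof of Theorem~\ref{theorem:constrained-risk-convex}.
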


\section{Examples}

We provide three examples that apply to estimation of one-dimensional
functionals to illustrate our results. For the first two, we consider
Gaussian mean estimation, where the results are simplest and cleanest to
state, and which immediately demonstrate the failure of the Hodges'
estimator. For the last set of examples, we consider super-efficient
estimation in a general family of nonparametric models.

\subsection{Gaussian mean estimation}

We provide two examples that apply to one-dimensional Gaussian mean
estimation to illustrate our results.  For the
first, we consider a zero-one loss function
indicating whether the estimated mean is near the
true mean.
Fix $\sigma^2 > 0$ and let $X_1,
\ldots, X_n$ be i.i.d.\ $P_\theta = \normal(\theta, \sigma^2)$, and let
$\scalarloss(t) = \indic{|t| \ge \sigma / \sqrt{n}}$, so that
\begin{equation*}
  \risk(\what{\theta}, P_\theta^n)
  = P_\theta^n\left(|\what{\theta}(X_1, \ldots, X_n)
  - \theta| \ge \frac{\sigma}{\sqrt{n}}\right),
\end{equation*}
where $P_\theta^n$ denotes the $n$-fold product of $X_i \simiid
\normal(\theta, \sigma^2)$.  Now, let $\delta_n \in [0, 1], \delta_n \to 0$
be an otherwise arbitrary sequence, and let $0 < c < 1$ be a fixed constant.
Define the sequence of local
alternative parameter spaces
\begin{equation*}
  \Theta_n \defeq \left\{\theta \in \R
  \mid 2 \frac{\sigma}{\sqrt{n}}
  \le |\theta| \le
  \frac{\sigma}{\sqrt{n}} \sqrt{c \log \frac{1}{\delta_n}}
  \right\}.
\end{equation*}
We then have the following proposition.
\begin{proposition}
  \label{proposition:threshold-loss}
  Let $\what{\theta}_n : \R^n \to \R$ be a sequence of estimators
  satisfying $\risk(\what{\theta}_n, P_0^n) \le \delta_n$
  for all $n$. Then
  \begin{equation*}
    \liminf_n \inf_{\theta \in \Theta_n}
    \risk(\what{\theta}_n, P_\theta^n)
    = \liminf_n \inf_{\theta \in \Theta_n}
    P_\theta^n\left(\sqrt{n} |\what{\theta}_n(X_1, \ldots, X_n) - \theta|
    \ge \sigma \right) = 1.
  \end{equation*}
\end{proposition}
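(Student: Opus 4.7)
The plan is to apply Corollary~\ref{corollary:generic-loss} at each $\theta \in \Theta_n$ with $P_0 = \normal(0,\sigma^2)^n$ and $P_1 = P_\theta^n$, using $\delta = \delta_n$, and then show that the resulting lower bound on $\risk(\what{\theta}_n, P_\theta^n)$ tends to $1$ uniformly over $\Theta_n$. The entire argument reduces to evaluating two quantities in closed form: the separation $\Delta$ and the $\chi^2$-affinity $\chipones{P_\theta^n}{P_0^n}$.

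First I would observe that since $\scalarloss(t) = \indic{|t| \ge \sigma/\sqrt{n}}$ is $\{0,1\}$-valued, the separation is $\Delta = \scalarloss(|\theta|/2) = \indic{|\theta| \ge 2\sigma/\sqrt{n}}$, which equals $1$ for every $\theta \in \Theta_n$ by the lower endpoint of the parameter set. Second, a routine Gaussian moment computation gives $\chipone{\normal(\theta,\sigma^2)}{\normal(0,\sigma^2)} = \exp(\theta^2/\sigma^2)$, and tensorization of the $\chi^2$-affinity across the $n$ i.i.d.\ coordinates yields $\chipones{P_\theta^n}{P_0^n} = \exp(n\theta^2/\sigma^2)$.

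Substituting these into Corollary~\ref{corollary:generic-loss} produces the bound
\[
  \risk(\what{\theta}_n, P_\theta^n)
  \ge \hinge{1 - \left(\exp(n\theta^2/\sigma^2)\,\delta_n\right)^{1/2}}^2.
\]
The upper endpoint of $\Theta_n$ forces $n\theta^2/\sigma^2 \le c\log(1/\delta_n)$, so $\exp(n\theta^2/\sigma^2)\,\delta_n \le \delta_n^{1-c}$ and the right-hand side is at least $\hinge{1 - \delta_n^{(1-c)/2}}^2$, uniformly in $\theta \in \Theta_n$. Since $c < 1$ and $\delta_n \to 0$ this quantity tends to $1$, and because $\risk(\what{\theta}_n, P_\theta^n) \le 1$ always (it is a probability under the $0$--$1$ loss), the claimed limit follows.

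There is no genuine obstacle here: once Corollary~\ref{corollary:generic-loss} is in hand, the proof is essentially a substitution. The only mild subtlety is insisting that both $\Delta = 1$ and the upper bound on $\chipones{P_\theta^n}{P_0^n}\,\delta_n$ hold \emph{uniformly} over $\Theta_n$, which is why the parameter set is defined by inequalities $2\sigma/\sqrt{n} \le |\theta| \le (\sigma/\sqrt{n})\sqrt{c\log(1/\delta_n)}$; the factor $c < 1$ is precisely what provides room to drive the second term to zero.
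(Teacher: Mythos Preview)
Your proof is correct and follows essentially the same approach as the paper: compute $\Delta = 1$ from the lower endpoint of $\Theta_n$, bound $\chipones{P_\theta^n}{P_0^n} \le \delta_n^{-c}$ from the upper endpoint, and apply Corollary~\ref{corollary:generic-loss}. Your exponent $\delta_n^{(1-c)/2}$ is in fact the correct one coming out of the corollary (the paper writes $\delta_n^{1-c}$, a harmless slip since both tend to zero).
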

\begin{remark}
  The Le Cam--H\'{a}jek asymptotic
  minimax theorem (cf.~\cite{VanDerVaart97,LeCamYa00}) implies that for any
  symmetric, quasiconvex loss $\scalarloss : \R^k \to \R_+$, if
  $\{P_\theta\}_{\theta \in \Theta}$ is a suitably regular family of
  distributions with Fisher information matrices $I_\theta$, then for any
  $\theta_0 \in \interior \Theta$ there exist sequences of prior densities
  $\pi_{n,c}$ supported on $\{\theta \in \R^k \mid \ltwo{\theta - \theta_0}
  \le c / \sqrt{n}\}$ such that
  \begin{equation}
    \liminf_{c \to \infty}
    \liminf_n \inf_{\what{\theta}_n}
    \int \E_\theta[\scalarloss(\sqrt{n}(\what{\theta}_n - \theta))]
    d\pi_{n,c}(\theta)
    \ge \E[\scalarloss(Z)]
    ~~ \mbox{where} ~~
    Z \sim \normal(0, I_{\theta_0}^{-1})
    \label{eqn:local-minimax}
  \end{equation}
  (see \cite[Lemma 6.6.5]{LeCamYa00}
  and also~\cite[Eq.~(9)]{VanDerVaart97}).
  This in turn implies that for Lebesgue-almost-all $\theta$, we have
  $\limsup_n \E_\theta[\scalarloss(\sqrt{n}(\what{\theta}_n - \theta))]
  \ge \E[\scalarloss(Z)]$ for $Z \sim \normal(0, I_\theta^{-1})$.
  For the indicator loss $\loss(t) = \indic{|t| \ge \sigma}$, these
  results imply that $\limsup_n P_\theta(|\sqrt{n}(\what{\theta}_n -
  \theta)| \ge \sigma) \ge 2 \Phi(-1)$ for almost all $\theta$ in our normal
  mean setting, where $\Phi$ is the standard normal
  CDF. Proposition~\ref{proposition:threshold-loss} strengthens this: if
  there exists a point of super-efficiency with asymptotic probability of
  error 0, then there exists a large set of points with asymptotic
  probability of error 1.
\end{remark}

\begin{proof}
  Assume that $n$ is large enough
  that $c \log \frac{1}{\delta_n} \ge 2$,
  and let $\theta \in \Theta_n$. A
  calculation then yields that
  \begin{equation*}
    \chipones{P_\theta^n}{P_0^n}
    = \exp\left(\frac{n \theta^2}{\sigma^2}\right)
    \le
    \exp\left(\frac{c \sigma^2 n \log \frac{1}{\delta_n}}{\sigma^2 n}\right)
    = \delta_n^{-c}.
  \end{equation*}
  We also have that $\scalarloss(\half|\theta|)
  = \indic{|\theta| \ge 2 \sigma / \sqrt{n}} =
  1$, and substituting this into Corollary~\ref{corollary:generic-loss}, we
  obtain $\risk(\what{\theta}, P_\theta^n) \ge \hinge{1 - \delta_n^{1-c}}^2$.
  As $c < 1$, this quantity tends to 1 as $n \to \infty$.
\end{proof}

Let us consider Corollary~\ref{corollary:power-losses} for our second
application. In this case, we consider estimating a Gaussian mean given $X_i
\simiid \normal(\theta, 1)$, but we use the absolute error $L(\theta, P) =
|\theta - \theta(P)|$ as our loss as opposed to the typical mean squared
error.
\begin{proposition}
  \label{proposition:absolute-loss}
  Let $\what{\theta} : \R^n \to \R$ be an estimator such that
  $\risk(\what{\theta}, P_0^n) \le \frac{\epsilon}{\sqrt{n}}$.
  Then for all $\alpha \in [0, 1]$, there exists
  $\theta$ such that
  \begin{equation*}
    \risk(\what{\theta}, P_\theta^n)
    \ge \sqrt{\frac{\alpha}{n}}
    \hinge{\sqrt[4]{\log \frac{1}{\epsilon}}
      - \sqrt[4]{\frac{\epsilon^{2 - 2\alpha}}{\alpha}}}^2.
  \end{equation*}
  In particular, if $\epsilon \le 10^{-2}$, then there exists
  $\theta$ with
  $\risk(\what{\theta}, P_\theta^n)
  \ge \frac{1}{4} \sqrt{\frac{\log \frac{1}{\epsilon}}{n}}$.
\end{proposition}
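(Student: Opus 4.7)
The plan is to invoke Corollary~\ref{corollary:power-losses} with the power $k = 1$ and to choose the alternative parameter $\theta$ so the resulting bound matches the claimed form. Since the absolute-error loss is $\scalarloss(t) = t$, the hypothesis $\risk(\what{\theta}, P_0^n) \le \epsilon/\sqrt{n}$ is exactly the corollary's assumption $\risk \le \delta^k$ with $\delta = \epsilon/\sqrt{n}$. Applied to any alternative $P_1 = P_\theta^n$, the corollary yields
\begin{equation*}
  \risk(\what{\theta}, P_\theta^n)
  \ge \hinge{|\theta|^{1/2}
    - \sqrt{\chipones{P_\theta^n}{P_0^n} \cdot \epsilon/\sqrt{n}}}^2,
\end{equation*}
so the task reduces to computing the $\chi^2$-affinity and choosing $\theta$ judiciously.

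For the first step, the standard calculation used in the proof of Proposition~\ref{proposition:threshold-loss} (now with $\sigma=1$) gives $\chipones{P_\theta^n}{P_0^n} = \exp(n\theta^2)$. The key second step is to parametrize the alternative by $\theta^2 = \alpha \log(1/\epsilon)/n$; this choice is made precisely so that the affinity becomes $\epsilon^{-\alpha}$, whence $\chipones{P_\theta^n}{P_0^n} \cdot \epsilon/\sqrt{n} = \epsilon^{1-\alpha}/\sqrt{n}$, while $|\theta|^{1/2} = (\alpha/n)^{1/4}\sqrt[4]{\log(1/\epsilon)}$. Pulling the common factor $(\alpha/n)^{1/4}$ out of both terms inside the hinge then collapses the lower bound to $\sqrt{\alpha/n}\hinge{\sqrt[4]{\log(1/\epsilon)} - \sqrt[4]{\epsilon^{2-2\alpha}/\alpha}}^2$, which is exactly the asserted inequality. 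The only real obstacle here is the algebraic bookkeeping required to match the target form; no ideas beyond the corollary are needed.

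For the numerical ``in particular'' claim I would specialize to $\alpha = 1/2$, so that $\epsilon^{2-2\alpha}/\alpha = 2\epsilon$ and the general bound reduces to $\frac{1}{\sqrt{2n}}\hinge{\sqrt[4]{\log(1/\epsilon)} - \sqrt[4]{2\epsilon}}^2$. The desired conclusion $\ge \tfrac{1}{4}\sqrt{\log(1/\epsilon)/n}$ is then equivalent to the elementary inequality $(2\epsilon/\log(1/\epsilon))^{1/4} \le 1 - 8^{-1/4}$, whose left side is monotone increasing in $\epsilon$ on $(0,1)$. It therefore suffices to verify the boundary case $\epsilon = 10^{-2}$, where the left side is approximately $0.257$ and the right side is approximately $0.405$, completing the proof.
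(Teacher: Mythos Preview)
Your argument is correct and follows the same route as the paper: apply Corollary~\ref{corollary:power-losses} with $k=1$, set $\theta^2 = \alpha n^{-1}\log(1/\epsilon)$ so that $\chipones{P_\theta^n}{P_0^n}=\epsilon^{-\alpha}$, and factor $(\alpha/n)^{1/4}$ out of the hinge. The only divergence is in the ``in particular'' step: the paper specializes to $\alpha=1/8$ and verifies $\sqrt[4]{\log(1/\epsilon)}-\sqrt[4]{8\epsilon^{7/4}}\ge\sqrt[4]{\tfrac{1}{2}\log(1/\epsilon)}$ for $\epsilon\le 10^{-2}$, whereas you take $\alpha=1/2$ and reduce to $(2\epsilon/\log(1/\epsilon))^{1/4}\le 1-8^{-1/4}$; both numerical checks go through, so this is a cosmetic difference rather than a methodological one.
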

\begin{proof}
  %% Let $\theta \ge 0$ with $\theta^2 = \frac{\log\frac{1}{\epsilon}}{n}$.
  %% Then we have
  %% $\chipones{P_\theta^n}{P_0^n}
  %% = \exp(n \theta^2) = \frac{1}{\epsilon}$ and that
  %% $\Delta = |\theta|$ in the notation of
  %% Corollary~\ref{corollary:power-losses}. The corollary
  %% then implies
  %% \begin{equation*}
  %%   \risk(\what{\theta}, P_\theta^n)
  %%   \ge \hinge{
  %%     \sqrt{\theta} -
  %%     \sqrt{\epsilon^{-1} \epsilon / \sqrt{n}}}^2
  %%   = \hinge{\sqrt[4]{\frac{1}{n} \log \frac{1}{\epsilon}}
  %%     - \sqrt[4]{\frac{1}{n}}}^2
  %%   = \frac{1}{\sqrt{n}}
  %%   \hinge{\sqrt[4]{\log\frac{1}{\epsilon}} - 1}^2.
  %% \end{equation*}
  %%
  Let $\alpha \in [0, 1]$, to be chosen presently. Let $\theta \ge 0$ with
  $\theta^2 = \frac{\alpha \log\frac{1}{\epsilon}}{n}$.  Then we have
  $\chipones{P_\theta^n}{P_0^n} = \exp(n \theta^2) =
  \frac{1}{\epsilon^\alpha}$ and that $\Delta = |\theta|$ in the notation of
  Corollary~\ref{corollary:power-losses}. The corollary then implies
  \begin{equation*}
    \risk(\what{\theta}, P_\theta^n)
    \ge \hinge{
      \sqrt{\theta} -
      \sqrt{\epsilon^{-\alpha} \epsilon / \sqrt{n}}}^2
    %% = \hinge{\sqrt[4]{\frac{\alpha}{n} \log \frac{1}{\epsilon}}
    %%   - \sqrt[4]{\frac{\epsilon^{1-\alpha}}{n}}}^2
    = \frac{\sqrt{\alpha}}{\sqrt{n}}
    \hinge{\sqrt[4]{\log\frac{1}{\epsilon}} -
      \sqrt[4]{\frac{\epsilon^{2 - 2 \alpha}}{\alpha}}}^2.
  \end{equation*}
  The second result of the proposition follows by taking
  $\alpha = 1/8$ and using the numerical fact that
  that $\sqrt[4]{\log\frac{1}{\epsilon}}
  - \sqrt[4]{8 \epsilon^{7/4}} \ge \sqrt[4]{\log\frac{1}{\epsilon}/2}$
  for $\epsilon \le 10^{-2}$.
\end{proof}

\newcommand{\hodges}{\what{\theta}_n^{\textup{Hodges}}}

As an example consequence of Proposition~\ref{proposition:absolute-loss},
consider the Hodges' estimator
\begin{equation*}
  \hodges \defeq \begin{cases} \wb{X}_n & \mbox{if}~ |\wb{X}_n| \ge n^{-1/4} \\
    0 & \mbox{otherwise},
  \end{cases}
\end{equation*}
where $\wb{X}_n \defeq \frac{1}{n}
\sum_{i = 1}^n X_i$.
At $\theta = 0$, this estimator satisfies
\begin{equation*}
  \E[|\hodges|]
  = \E[|\wb{X}_n| \indics{|\wb{X}_n| \ge n^{-1/4}}]
  \le \sqrt{\frac{1}{n}} \cdot
  \sqrt{P_0(|\wb{X}_n| \ge n^{-1/4})}
  \le \sqrt{\frac{2}{n} \exp\left(-\frac{\sqrt{n}}{2}\right)}
\end{equation*}
by the standard tail bound that $\P(|Z| \ge t) \le 2 \exp(-t^2 / 2 \sigma^2)$
for $Z \sim \normal(0, \sigma^2)$. In particular,
for all large enough $n$, there is a $\theta \in [0, n^{-1/2}]$ such that
\begin{equation*}
  \E_\theta[|\hodges - \theta|] \ge \frac{1}{8 n^{1/4}} \gg \frac{1}{\sqrt{n}}.
\end{equation*}

\subsection{Super-efficient estimation in nonparametric models}
\label{sec:nonparametric-efficiencies}

\newcommand{\effinluence}{\dot{\theta}_0}

It is often interesting to derive efficiency lower bounds outside of
standard parametric models; it is our experience that students are
frequently curious about such quantities, especially when they have seen
only Fisher-information-based lower bounds. Conveniently, we can also apply
our results to estimation of functionals in general non-parametric
models. In this case, we focus on quantities where the classical
asymptotic normality results apply, so that there do indeed exist
classically efficient estimators and an analogue of the Le Cam--H\'{a}jek
local asymptotic minimax theorems. We first present a general result that
applies to appropriately smooth parameters of the underlying distribution,
which we subsequently specialize to estimation of the mean of an arbitrary
distribution with finite variance. We adapt the classical idea of
Stein~\cite{Stein56a}, which constructs hardest one-dimensional subproblems,
following the treatment of~\citet[Chapter 25]{VanDerVaart98}.

To set the stage, consider estimation of a parameter $\theta(P_0) \in \R$ of
a distribution $P_0$ on the space $\mc{Z}$.  Letting $\mc{P}$ denote the
collection of all distributions on $\mc{Z}$, we consider sub-models
$\mc{P}_0 \subset \mc{P}$ around $P_0$ defined in terms of local
perturbations of $P_0$. In particular, let $\mc{G} \subset L^2(P_0)$ consist
of those functions $g : \mc{Z} \to \R$ satisfy $\E_0[g(Z)] = 0$ and
$\E_0[g(Z)^2] < \infty$.  For bounded functions $g \in \mc{G}$, we may
consider tilts of the distribution $P_0$ of the form
\begin{equation*}
  dP(z) = (1 + t g(z)) dP_0(z)
\end{equation*}
for small $t$; however, as $g$ may be unbounded, we require a bit more care.
Following~\cite[Example 25.16]{VanDerVaart98}, we let $\phi : \R \to [0, 2]$
be any $\mc{C}^3$ function satisfying $\phi(1) = 1$, $\phi'(1) = 1$, and for
which both $\linf{\phi'} \le K$ and $\linf{\phi''} \le K$ for a constant
$K$; for example, $\phi(t) = 2 / (1 + e^{-2t})$ suffices. For any $g \in
\mc{G}$, define the tilted distribution
\begin{equation}
  \label{eqn:tilted-distribution}
  dP_{t,g}(z) \defeq \frac{1}{C_t} \phi(t g(z)) dP_0(z)
  ~~ \mbox{where} ~~
  C_t = \int \phi(t g(z)) dP_0(z).
\end{equation}
The following lemma describes the divergence of $P_{t,g}$
from $P_0$ (see
Section~\ref{sec:proof-tilt-dists} for proof).
\begin{lemma}
  \label{lemma:tilt-dists}
  Let $g \in \mc{G}$ and $P_0$ and $P_{t,g}$ be as defined in
  Eq.~\eqref{eqn:tilted-distribution}. Then
  \begin{equation*}
    \dchi{P_{t,g}}{P_0}
    = 1 + t^2 \E_0[g(Z)^2] + o(t^2)
    ~~ \mbox{and} ~~
    |C_t - 1| \le
    \frac{K}{2} t^2 \E_0[g(Z)^2].
  \end{equation*}
\end{lemma}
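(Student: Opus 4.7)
The plan is to Taylor expand $\phi$ around zero using $\phi(0)=1$, $\phi'(0)=1$, and $\linf{\phi''}\le K$, and then control the leftover remainder terms by dominated convergence, since $g$ is only assumed to lie in $L^2(P_0)$. Writing $R(s)\defeq \phi(s)-1-s$, I would first record the two complementary pointwise bounds
\[
|R(s)|\le \tfrac{K}{2}s^2 \qquad \text{and} \qquad |R(s)|\le (K+1)|s|,
\]
obtained from $|\phi''|\le K$ (together with $R(0)=R'(0)=0$) and from $|\phi'-1|\le K+1$ respectively. The second bound is the essential ingredient for the unbounded-$g$ case.

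For the statement about $C_t$, I would simply note that $\E_0[g]=0$ gives $C_t-1 = t\,\E_0[g] + \E_0[R(tg)] = \E_0[R(tg)]$, and the quadratic bound immediately yields $|C_t-1|\le \tfrac{K}{2}t^2\,\E_0[g^2]$.

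For the divergence expansion, I would compute the $\chi^2$-affinity $\chipones{P_{t,g}}{P_0}=C_t^{-2}\!\int\phi(tg)^2\,dP_0$. Expanding $\phi(tg)^2 = (1+tg+R(tg))^2$ and integrating, then using $\E_0[g]=0$ and $\E_0[R(tg)]=C_t-1$, yields
\[
\int\!\phi(tg)^2\,dP_0 \;=\; 1 + 2(C_t-1) + t^2\E_0[g^2] + 2t\,\E_0[gR(tg)] + \E_0[R(tg)^2].
\]
To kill the last two terms to order $o(t^2)$, I would combine the two remainder estimates: the minimum bound
\[
\left|\tfrac{gR(tg)}{t}\right|\le \min\!\left\{\tfrac{K}{2}t|g|^3,\,(K+1)g^2\right\}\le (K+1)g^2
\]
is in $L^1(P_0)$ and tends to $0$ pointwise as $t\to0$, so dominated convergence gives $\E_0[gR(tg)] = o(t)$; similarly $R(tg)^2/t^2\le (K+1)^2 g^2$ is dominated and vanishes pointwise, so $\E_0[R(tg)^2] = o(t^2)$. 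Dividing by $C_t^2 = 1 + O(t^2)$ and expanding $1/C_t^2 = 1 + O(t^2)$ then produces $\chipones{P_{t,g}}{P_0} = 1 + t^2\,\E_0[g^2] + o(t^2)$, which is the claimed expansion.

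The main obstacle I anticipate is precisely the cross term $\E_0[gR(tg)]$: the pure quadratic remainder bound yields $|gR(tg)|\le \tfrac{K}{2}t^2|g|^3$ and would require a third moment that $g\in L^2(P_0)$ does not provide. The two-sided bound above --- in effect using the linear estimate where $|g|$ is large and the quadratic estimate where $|g|$ is small --- is the mechanism that lets the dominated convergence step go through with only second-moment control of $g$.
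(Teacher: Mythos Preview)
Your argument is correct and follows essentially the same route as the paper: Taylor expand $\phi$ at zero, use $\E_0[g]=0$ and the quadratic remainder bound for $C_t$, and then combine the linear and quadratic remainder estimates with dominated convergence to extract the $t^2\E_0[g^2]$ term under only an $L^2$ assumption on $g$. The only cosmetic difference is packaging: the paper applies DCT directly to the full integrand $(\phi(tg)/C_t-1)^2/t^2$, whereas you expand $\phi(tg)^2$ and handle the cross and square remainder terms separately, but the dominating functions and pointwise limits are the same.
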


With this setting, let us assume that our parameter $\theta$ of interest is
smooth in the underlying perturbation~\eqref{eqn:tilted-distribution},
meaning that there exists an \emph{influence function} $\effinluence :
\mc{Z} \to \R$, $\effinluence \in L^2(P_0)$, with $\E_0[\effinluence(Z)] =
0$ such that
\begin{equation}
  \label{eqn:efficient-influence}
  \theta(P_{t,g}) = \theta(P_0) + t \E_0[\effinluence(Z) g(Z)]
  + o(t)
\end{equation}
as $t \to 0$, that is, $\theta(P_{t,g})$ has a linear first-order expansion
in $L^2$ based on $\effinluence$. For example,
the mean $\theta(P) = \E_P[Z]$ has
the identity mapping $\effinluence(Z) = Z - \E_P[Z]$.
For more on such linear expansions and
their importance and existence, see~\cite[Chapter~25]{VanDerVaart98}. In
short, however, the influence function allows extension of the Fisher
Information from classical problems, and by defining $I_0^{-1} \defeq
\E_{P_0}[\effinluence(Z)^2]$, one has the analogue of the local minimax
lower bound~\eqref{eqn:local-minimax} that there exist sequences of prior
densities $\pi_n$ supported on $\{t \in \R \mid |t| \le 1 / \sqrt{n}\}$ such
that
\begin{equation}
  \label{eqn:nonparametric-local-minimax}
  \sup_{g \in \mc{G}}
  \liminf_n \inf_{\what{\theta}_n}
  \int \E_{P_{t,g}^n}[\loss(\sqrt{n}(\what{\theta}_n - \theta(P_{t,g})))]
  d\pi_n(t) \ge \E[\loss(Z)]
  ~~ \mbox{where} ~~
  Z \sim \normal(0, I_0^{-1}).
\end{equation}
The supremum above may be taken to be over only scalar multiples
of the function $\effinluence$.

\subsubsection{Non-convergence in probability: the general case}

We now come to our super-efficiency result, which we will specialize to the
nonparametric mean presently. Essentially the weakest typical form of
convergence of estimators is convergence in probability, which is of course
implied by convergence in mean-square or absolute error. As our general
constrained risk inequality (Corollary~\ref{corollary:generic-loss}) handles
this case without challenge, and because lower bounds on the probability of
error are strong, we focus on the zero-one error.
Let $K < \infty$ be an
arbitrary constant, and for each $n$, define the loss function $\loss(t) =
\indics{\sqrt{n} |t| \ge K}$, so that
\begin{equation*}
  \risk(\what{\theta}, P^n) = P^n\left(\sqrt{n} |\what{\theta}(Z_1, \ldots,
  Z_n) - \theta(P)| \ge K\right).
\end{equation*}
Under the assumption that $\what{\theta}_n$ is a super-efficient sequence of
estimators under $P_0$, we will show that for essentially \emph{all}
non-trivial local alternatives, defined by the
tilting~\eqref{eqn:tilted-distribution}, the estimators $\what{\theta}_n$
have probability of error tending to 1.

Making this more precise, consider the subset
\begin{equation}
  \label{eqn:non-trivial-perturbations}
  \mc{G}_0 \defeq \{g \in \mc{G} \mid \E_0[\effinluence(Z) g(Z)] \neq 0,
  \E_0[g(Z)^2] \le 1 \},
\end{equation}
that is, those functions $g \in \mc{G}$ for which the perturbation of
$\theta(P_0)$ to $\theta(P_{t,g})$ is non-trivial as $t \to 0$, by the
first-order expansion~\eqref{eqn:efficient-influence}.  Let us suppose that
$\risk(\what{\theta}_n, P_0^n) \le \delta_n$ for all $n$, where $\delta_n
\to 0$ and $\frac{1}{n} \log \frac{1}{\delta_n} \to 0$ (this last assumption
is simply to make our argument simpler).
Now, let $B > 2$ and $c \in (0, 1)$ be otherwise arbitrary
constants, and for each $g \in \mc{G}_0$,
define the set of local alternative distributions
\begin{equation}
  \mc{P}_{n,g} \defeq
  \left\{P_{t,g} \in \mc{P}
  \mid
  \frac{K^2}{n}
  \frac{B^2}{\E_0[\effinluence(Z) g(Z)]^2}
  \le t^2 \le \frac{c}{n}
  \log \frac{1}{\delta_n} \right\}.
  \label{eqn:tilted-family}
\end{equation}
We have the following proposition.
\begin{proposition}
  \label{proposition:nonparametric-general}
  Let $\what{\theta}_n : \mc{Z}^n \to \R$ be a sequence of estimators
  satisfying $\risk(\what{\theta}_n, P_0^n) \le \varepsilon_n$, where
  $\epsilon_n \to 0$.
  Let $\delta_n \ge \varepsilon_n$ be any sequence
  satisfying
  $\delta_n \to 0$ and $n^{-1} \log \delta_n \to 0$.
  Then
  \begin{equation*}
    \inf_{g \in \mc{G}_0} \liminf_n \inf_{P \in \mc{P}_{n,g}}
    \risk(\what{\theta}_n, P^n)
    = \inf_{g \in \mc{G}_0}
    \liminf_n \inf_{P \in \mc{P}_{n,g}}
    P^n \left(\sqrt{n} |\what{\theta}_n(Z_1, \ldots, Z_n)
    - \theta(P)| \ge K \right) = 1.
  \end{equation*}
\end{proposition}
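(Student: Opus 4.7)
The plan is to apply Corollary~\ref{corollary:generic-loss} directly to the pair $(P_0^n, P_{t,g}^n)$ and show that both ingredients of the bound behave well for each fixed $g \in \mc{G}_0$ and uniformly in $t$ over the range defining $\mc{P}_{n,g}$. Fix $g \in \mc{G}_0$; since $\risk(\what{\theta}_n, P^n) \le 1$, it suffices to prove $\liminf_n \inf_{P \in \mc{P}_{n,g}} \risk(\what{\theta}_n, P^n) \ge 1$. Note that for every $P_{t,g} \in \mc{P}_{n,g}$, the definition gives $t^2 \le c n^{-1} \log (1/\delta_n) \to 0$ by assumption, so $t \to 0$ uniformly over $\mc{P}_{n,g}$ as $n \to \infty$.

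The first step is to control the $\chi^2$-affinity. By tensorization, $\chipones{P_{t,g}^n}{P_0^n} = \chipone{P_{t,g}}{P_0}^n$, and Lemma~\ref{lemma:tilt-dists} gives $\chipone{P_{t,g}}{P_0} = 1 + t^2 \E_0[g(Z)^2] + o(t^2)$ as $t \to 0$. Using $\E_0[g(Z)^2] \le 1$ (since $g \in \mc{G}_0$), the elementary bound $1 + x \le e^x$, and the upper bound $t^2 \le c n^{-1} \log(1/\delta_n)$, I obtain
\begin{equation*}
  \chipones{P_{t,g}^n}{P_0^n}
  \le \exp\!\left(n t^2 \E_0[g(Z)^2](1 + o(1))\right)
  \le \delta_n^{-c(1+o_n(1))},
\end{equation*}
where the $o_n(1)$ term is uniform in $t$ over $\mc{P}_{n,g}$ (since $t \to 0$ uniformly). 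Since $c < 1$, we may fix $\eta > 0$ with $c(1+\eta) < 1$ and conclude that for all $n$ large enough, $\chipones{P_{t,g}^n}{P_0^n} \cdot \delta_n \le \delta_n^{1 - c(1+\eta)}$ uniformly in $P_{t,g} \in \mc{P}_{n,g}$.

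The second step is to compute the separation $\Delta$. The influence-function expansion~\eqref{eqn:efficient-influence} yields $|\theta(P_{t,g}) - \theta(P_0)| = |t| \cdot |\E_0[\effinluence(Z) g(Z)]| \cdot (1 + o(1))$ as $t \to 0$. The lower bound on $t^2$ in $\mc{P}_{n,g}$ implies $\sqrt{n}\, |t| \cdot |\E_0[\effinluence(Z) g(Z)]| \ge K B$, and since $t \to 0$ uniformly on $\mc{P}_{n,g}$, the multiplicative correction $(1+o(1))$ is also uniform. Because $B > 2$, for all sufficiently large $n$ we therefore have $\sqrt{n}\, |\theta(P_{t,g}) - \theta(P_0)|/2 \ge K$, and hence $\Delta = \scalarloss(\half |\theta(P_{t,g}) - \theta(P_0)|) = \indics{\sqrt{n} \cdot \half |\theta(P_{t,g}) - \theta(P_0)| \ge K} = 1$ for every $P_{t,g} \in \mc{P}_{n,g}$.

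Substituting these bounds into Corollary~\ref{corollary:generic-loss}, I get, uniformly in $P \in \mc{P}_{n,g}$,
\begin{equation*}
  \risk(\what{\theta}_n, P^n)
  \ge \hinge{1 - \delta_n^{(1 - c(1+\eta))/2}}^2,
\end{equation*}
which tends to $1$ as $n \to \infty$ because $1 - c(1+\eta) > 0$ and $\delta_n \to 0$. Taking the infimum over $g \in \mc{G}_0$ on both sides completes the argument. The main technical point is ensuring that the $o(t)$ in the influence-function expansion and the $o(t^2)$ in the $\chi^2$ expansion are uniform in $t$ over $\mc{P}_{n,g}$; this is where the assumption $n^{-1} \log(1/\delta_n) \to 0$ (forcing $t \to 0$ uniformly) is used, and it is the only subtle step in an otherwise direct application of the general constrained risk inequality.
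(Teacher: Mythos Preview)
Your proof is correct and follows essentially the same route as the paper: fix $g$, use Lemma~\ref{lemma:tilt-dists} with tensorization to bound the $\chi^2$-affinity by $\delta_n^{-c + o(1)}$, use the influence-function expansion~\eqref{eqn:efficient-influence} together with $B>2$ to force the separation $\Delta = 1$, and plug both into Corollary~\ref{corollary:generic-loss}. Your treatment is in fact slightly more careful than the paper's in making explicit that the $o(t^2)$ and $o(t)$ corrections are uniform over $t$ in the range defining $\mc{P}_{n,g}$, and in identifying this as the place where the hypothesis $n^{-1}\log(1/\delta_n)\to 0$ is used.
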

\begin{remark}
  This result parallels Proposition~\ref{proposition:threshold-loss},
  applying to nonparametric estimators. In comparison with the local
  asymptotic minimax result~\eqref{eqn:nonparametric-local-minimax}, we see
  the stronger result that super-efficiency at a single distribution
  for the zero-one error implies that asymptotically, the loss is as large
  as possible for a wide range of alternative distributions.
\end{remark}
\begin{proof}
  Fix $g \in \mc{G}_0$, and let $\theta_t = \theta(P_{t,g})$
  and $\theta_0 = \theta(P_0)$ be parameters of interest. For
  shorthand, define $\Delta = \E_0[\effinluence(Z) g(Z)] \neq 0$, so that
  $\theta_t = \theta_0 + (1 + o(1)) t \Delta$ as $t \to 0$. By
  Lemma~\ref{lemma:tilt-dists}, we have that
  \begin{equation*}
    \chipones{P_{t,g}^n}{P_0^n}
    = \left(1 + (1 + o(1)) t^2 \E_0[g(Z)^2]\right)^n
  \end{equation*}
  as $t \to 0$, so that if $\E_0[g(Z)^2] \le 1$,
  \begin{align}
    \sup_t
    \left\{
    \chipones{P_{t,g}^n}{P_0^n}
    \mid t^2 \le \frac{c}{n} \log \frac{1}{\delta_n}\right\}
    & \le \left(1 + (1 + o(1)) \frac{c}{n} \log \frac{1}{\delta_n}\right)^n
    \nonumber \\
    & \le \exp\left((1 + o(1)) c \log \frac{1}{\delta_n}\right)
    = \delta_n^{-c + o(1)}
    \label{eqn:delta-sup-power}
  \end{align}
  as $n \to \infty$.
  Note that as $B > 2$, by the 
  definition~\eqref{eqn:efficient-influence} of an influence function,
  we have for all $t$ satisfying $\frac{B K}{|\Delta|}
  \le \sqrt{n} |t| \le \sqrt{c \log \frac{1}{\delta_n}}$ that
  \begin{align*}
    \loss(|\theta_t - \theta_0| / 2)
    = \indic{\sqrt{n}|\theta_t - \theta_0| \ge 2 K}
    & = \indic{\sqrt{n} \left|\frac{B K}{\sqrt{n}}
      (1 + o(1)) \Delta\right| \ge 2 K} \\
    & = \indic{|B K \pm o(1)| \ge 2 K}
    = 1 ~~ \mbox{for~large~enough~} n,
  \end{align*}
  where the final equality holds because $B > 2$.
  Applying Corollary~\ref{corollary:generic-loss} and
  inequality~\eqref{eqn:delta-sup-power}, we thus obtain
  for large enough $n$, all $P \in \mc{P}_{n,g}$ satisfy
  \begin{equation*}
    \risk(\what{\theta}_n, P^n)
    \ge \hinge{1 - \sqrt{\delta_n^{-c + o(1)} \delta_n}}^2,
    %% = \hinge{1 - \sqrt{\delta_n^{1 - c + o(1)}}}^2
  \end{equation*}
  which tends to $1$ as $n \to \infty$ because $\delta_n \to 0$ and
  $c < 1$.
\end{proof}

\subsubsection{Non-convergence in probability for the mean}

Proposition~\ref{proposition:nonparametric-general} is abstract, so we make
it more concrete by considering mean estimation for distributions with
variance 1.  Let $P_0$ be a distribution on $\R$ with $\E_0[Z] = 0$ and
$\var_0(Z) = 1$. In this case, the influence function is the identity
mapping $\effinluence(z) = z$. Let $0 < K < \infty$ be any constant. In this
case, the family $\mc{G}_0$ of non-trivial
perturbations~\eqref{eqn:non-trivial-perturbations} is precisely those with
non-zero covariance with the random variable $Z$,
\begin{equation*}
  \mc{G}_0 =
  \left\{ g : \R \to \R \mid
  \E_0[g(Z)] = 0, \E_0[g(Z)^2] \le 1,
  ~ \mbox{and} ~
  \E_0[Z g(Z)] \neq 0
  \right\}.
\end{equation*}
We thus have the following corollary, which applies to the tilted families
$\mc{P}_{n,g}$ as above~\eqref{eqn:tilted-family}.
\begin{corollary}
  Let $\what{\theta}_n : \mc{Z}^n \to \R$ be any sequence of estimators
  such that $P_0^n(\sqrt{n}|\what{\theta}_n| \ge K) \le \varepsilon_n$,
  where $\epsilon_n \to 0$.
  Let $\delta_n \ge \varepsilon_n$ be any sequence
  satisfying
  $\delta_n \to 0$ and $n^{-1} \log \delta_n \to 0$.
  Then
  \begin{equation*}
    \inf_{g \in \mc{G}_0} \liminf_n \inf_{P \in \mc{P}_{n,g}}
    P^n\left(\sqrt{n} |\what{\theta}_n - \E_P[Z]| \ge K\right) = 1.
  \end{equation*}
\end{corollary}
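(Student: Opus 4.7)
The plan is to obtain this corollary as an immediate specialization of Proposition~\ref{proposition:nonparametric-general} to the mean functional $\theta(P) = \E_P[Z]$. Under the (implicit) additional hypothesis $n^{-1}\log\frac{1}{\delta_n} \to 0$ required by that proposition, I only need to verify two things: that $\theta$ satisfies the linear expansion~\eqref{eqn:efficient-influence} along the tilts $P_{t,g}$ with influence function $\effinluence(z) = z$, and that the resulting set $\mc{G}_0$ defined in~\eqref{eqn:non-trivial-perturbations} coincides with the one displayed in the corollary.

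For the expansion, I would compute $\theta(P_{t,g}) = C_t^{-1} \int z\, \phi(t g(z))\, dP_0(z)$ and Taylor-expand $\phi$ at $0$ using $\phi(0) = 1$, $\phi'(0) = 1$, and $\linf{\phi''} \le K$ (which the example $\phi(s) = 2/(1 + e^{-2s})$ verifies), writing $\phi(s) = 1 + s + r(s)$ with $|r(s)| \le \frac{K}{2} s^2$. Combined with $\E_0[Z] = 0$ and the bound $|C_t - 1| = O(t^2)$ afforded by Lemma~\ref{lemma:tilt-dists}, the numerator reduces to $t\,\E_0[Z g(Z)] + O(t^2)$, giving $\theta(P_{t,g}) = t\,\E_0[Z g(Z)] + o(t)$ and identifying $\effinluence(z) = z$. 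The set $\mc{G}_0$ in~\eqref{eqn:non-trivial-perturbations} then reads verbatim as $\{g \in \mc{G} : \E_0[g(Z)^2] \le 1,\ \E_0[Z g(Z)] \neq 0\}$. Feeding this into Proposition~\ref{proposition:nonparametric-general} with $\Delta = \E_0[Z g(Z)]$ produces the claimed limit.

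The main technical hurdle is controlling the remainder $\int z\, r(t g(z))\, dP_0(z)$, which is dominated by $\frac{K}{2} t^2\, \E_0[|Z|\, g(Z)^2]$; this is a cross moment that $g \in L^2(P_0)$ together with $\E_0[Z^2] = 1$ does not directly supply. I would handle this by restricting $\mc{G}_0$ to bounded, compactly supported $g$, which leaves the infimum in the corollary unchanged because the non-triviality condition $\E_0[Z g(Z)] \neq 0$ is open in $L^2(P_0)$ and the infimum is monotone on denser classes; alternatively one may appeal to the standard $L^2$-differentiability machinery in~\cite[Ch.~25]{VanDerVaart98} which guarantees~\eqref{eqn:efficient-influence} for every $g \in L^2(P_0)$ under the finite second-moment assumption on $Z$.
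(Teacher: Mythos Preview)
Your approach is exactly the one the paper takes: the corollary is stated as an immediate specialization of Proposition~\ref{proposition:nonparametric-general} once one identifies the influence function $\effinluence(z)=z$ and reads off $\mc{G}_0$, and the paper gives no separate proof. Your expansion argument and identification of $\mc{G}_0$ are correct.

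One remark on the ``technical hurdle'' you flag: it is not actually a hurdle, and you do not need to restrict to bounded $g$ or invoke the abstract machinery. Instead of the second-order bound $|\phi(s)-1-s|\le \tfrac{K}{2}s^2$, use the first-order Lipschitz bound $|\phi(s)-1|\le K|s|$ to get $|\phi(s)-1-s|\le (K+1)|s|$. Then the integrand $z\cdot t^{-1}\bigl(\phi(tg(z))-1-tg(z)\bigr)$ is dominated by $(K+1)|z|\,|g(z)|$, which is $P_0$-integrable by Cauchy--Schwarz since $\E_0[Z^2]=1$ and $\E_0[g(Z)^2]\le 1$. Pointwise convergence to zero follows from $\phi'(0)=1$, so Lebesgue's dominated convergence theorem gives the $o(t)$ remainder directly for every $g\in\mc{G}_0$.
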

\noindent
In short, we see the expected result: if any estimator achieves even the
in-probability convergence $\what{\theta}_n = o_P(1/\sqrt{n})$ at $\theta =
0$, then there must be a large collection of distributions where the
\emph{best} performance of the estimator across the entire collection must
be worse than the typical $\sqrt{n}$-rate of convergence.

%% \jcdcomment{Add a bit of discussion about how exciting 0-1 losses are.}

\section{Discussion}

We have provided an extension of Brown and Low's constrained risk
inequality~\cite{BrownLo96}, showing how to provide risk inequalities for
general losses.  Our results on efficient non-parametric estimators in
Section~\ref{sec:nonparametric-efficiencies} immediately extend beyond 0-1
losses. For example, consider estimating a parameter $\theta(P_0)$ of a
distribution $P_0$ where $\theta$ has influence function $\effinluence : \R
\to \R$, and assume the estimator sequence $\what{\theta}_n : \R^n \to \R$
satisfies
\begin{equation*}
  \E_{P_0^n}\left[|\what{\theta}_n - \theta(P_0)|\right]
  \le \sqrt{\frac{\delta_n}{n}}
\end{equation*}
where $\delta_n \to 0$. Then for the family $\mc{G}_0$ consisting
of $g : \R \to \R$ with $\E_0[g(Z)] = 0$, $\E_0[g(Z)^2] \le 1$, and
$\E_0[\effinluence(Z) g(Z)] \neq 0$, we can consider an analogue of the tilted
family~\eqref{eqn:tilted-family} where for $0 < c_0 < c_1 < 1$ we define
\begin{equation*}
  \mc{P}_{n,g} = \left\{P_{t,g} \mid
  c_0 \frac{\log \frac{1}{\delta_n}}{n} \le t^2 \le
  c_1 \frac{\log \frac{1}{\delta_n}}{n} \right\}.
\end{equation*}
Then by Corollary~\ref{corollary:power-losses} and an argument
analogous to that for Proposition~\ref{proposition:absolute-loss}, there
exists a numerical constant $K > 0$ such that for
all $g \in \mc{G}_0$,
\begin{equation*}
  \liminf_n \inf_{P \in \mc{P}_{n,g}}
  \sqrt{\frac{n}{\log \frac{1}{\delta_n}}}
  \E_{P^n}\left[|\what{\theta}_n - \theta(P)|\right]
  \ge K |\E_0[\effinluence(Z) g(Z)]| > 0.
\end{equation*}

The one-dimensional lower bounds we have provided are, we hope,
transparent---relying only on the Cauchy-Schwarz inequality---and easy to
apply to a range of estimation settings, making them well-suited to
pedagogical situations. It is possible to follow \citeauthor{BrownLo96}'s
work~\cite{BrownLo96} to give non-adaptivity results in nonparametric
function estimation~\cite[cf.]{DonohoJo95, Lepskii90, Lepskii91, BrownLo96,
  Tsybakov98}, with relatively straightforward derivations (though of
course, these results are known).
We hope that our constrained risk inequalities for general
losses may lead to easier understanding of such issues in
other areas as well.

%% , our results also have implications for adaptive estimation~\cite{}.  For
%% example, our results imply that in addition to the squared error, for
%% numerous other losses, there do not exist estimators of the value of a
%% function at a point, given noisy observations of the function, that are
%% simultaneously asymptotically minimax over a range of Sobolev spaces or
%% Lipschitz classes. It is possible to give analogues of the non-adaptivity
%% results of~\cite{BrownLo96} (i.e.\ Theorem~3 and Sections 4 \& 5 of the
%% cited paper), and given the similarity in form of our results to those of
%% Brown and Low, such non-adaptivity results are relatively straightforward to
%% derive.  We hope that our constrained risk inequalities for general losses
%% may lead to additional results in other areas.

\section{Proofs}
\label{sec:proofs}

\subsection{Proof of Theorem~\ref{theorem:constrained-risk-convex}}
\label{sec:proof-constrained-risk-convex}

It is no loss of generality to assume that $\what{\theta}(z) \in [\theta_0,
  \theta_1] = \{t \theta_0 + (1 - t) \theta_1 \mid t \in [0, 1]\}$ for all
$z$: letting $\mbox{proj}(\theta) = \argmin_{\theta'} \{\ltwo{\theta -
  \theta'} \mid \theta' \in [\theta_0, \theta_1]\}$ be the projection of
$\theta$ onto the segment $[\theta_0, \theta_1]$, then
$\ltwo{\mbox{proj}(\theta) - \theta_i} \le \ltwo{\theta - \theta_i}$ for $i
\in \{0, 1\}$ by standard properties of convex
projections~\cite{HiriartUrrutyLe93ab}.

For any $\theta \in [\theta_0, \theta_1]$, which must
satisfy $\theta = t \theta_0 + (1 - t) \theta_1$,
we have
\begin{align}
  \lefteqn{\sqrt{\scalarloss(\ltwo{\theta - \theta_0})}
    + \sqrt{\scalarloss(\ltwo{\theta - \theta_1})}
    = \sqrt{\scalarloss((1 - t) \ltwo{\theta_0 - \theta_1})}
    + \sqrt{\scalarloss(t \ltwo{\theta_0 - \theta_1})}} \nonumber \\
  & \qquad\qquad\qquad\qquad ~
  \ge \sqrt{\scalarloss((1 - t) \ltwo{\theta_0 - \theta_1})
    + \scalarloss(t \ltwo{\theta_0 - \theta_1})}
  \ge \sqrt{2 \scalarloss\left(\half\ltwo{\theta_0 - \theta_1}\right)}
  \label{eqn:majorization-convex}
\end{align}
as $\scalarloss(t a) + \scalarloss((1 - t) a)$ is minimized by $t = \half$
for any $a \ge 0$.  Using the majorization
inequality~\eqref{eqn:majorization-convex} and our without loss of
generality assumption that $\what{\theta}(z) \in [\theta_0, \theta_1]$ for
all $z \in \mc{Z}$, we thus have
\begin{equation}
  \label{eqn:majorization-summed-risk-lb}
  \E_1\left[\scalarloss(\ltwos{\what{\theta} - \theta_0})^{1/2}\right]
  + \E_1\left[\scalarloss(\ltwos{\what{\theta} - \theta_1})^{1/2}\right]
  \ge \sqrt{2 \scalarloss\left(\half\ltwo{\theta_0 - \theta_1}\right)}
  = \Delta^{1/2}.
\end{equation}
Now, using the Cauchy--Schwarz inequality and rearranging
inequality~\eqref{eqn:majorization-summed-risk-lb}, we have
\begin{equation*}
  \risk(\what{\theta}, P_1)
  \ge \E_1\left[\scalarloss(\ltwos{\what{\theta} - \theta_1})\right]
  \ge \E_1\left[\scalarloss(\ltwos{\what{\theta} - \theta_1})^{1/2}\right]^2
  \ge \hinge{\Delta^{1/2} -
    \E_1\left[\scalarloss(\ltwos{\what{\theta} - \theta_0})^{1/2}\right]}^2.
\end{equation*}
Finally, a likelihood ratio change of measure yields
that
\begin{align*}
  \E_1\left[\scalarloss(\ltwos{\what{\theta} - \theta_0})^{1/2}\right]
  & = \E_0\left[\frac{dP_1}{dP_0}
    \scalarloss(\ltwos{\what{\theta} - \theta_0})^{1/2}\right] \\
  & \le \E_0\left[\frac{dP_1^2}{dP_0^2}\right]^{1/2}
  \E_0\left[\scalarloss(\ltwos{\what{\theta} - \theta_0})\right]^{1/2}
  = \left(\chipone{P_1}{P_0} \risk(\what{\theta}, P_0)\right)^{1/2}.
\end{align*}
This gives the lower bound~\eqref{eqn:constrained-risk-convex}
once we use that $\risk(\what{\theta}, P_0) \le \delta$.

\subsection{Proof of Corollary~\ref{corollary:generic-loss}}
\label{sec:proof-corollary-generic-loss}

The proof is nearly identical to that of
Theorem~\ref{theorem:constrained-risk-convex}, with one minor
change. Instead of the majorization
inequality~\eqref{eqn:majorization-convex},
we have for all $t \in [0, 1]$ that
\begin{equation*}
  \scalarloss(t \ltwo{\theta_0 - \theta_1})
  + \scalarloss((1 - t) \ltwo{\theta_0 - \theta_1})
  \ge \scalarloss\left(\half \ltwo{\theta_0 - \theta_1}\right).
\end{equation*}
Substituting this and the definition $\Delta = \scalarloss(\half
\ltwo{\theta_0 - \theta_1})$, then following the proof of
Theorem~\ref{theorem:constrained-risk-convex}, \emph{mutatis mutandis},
gives the corollary.

\subsection{Proof of Corollary~\ref{corollary:power-losses}}

The proof is again identical to
Theorem~\ref{theorem:constrained-risk-convex}, except that we consider
separately the cases $k \in \openleft{0}{2}$ and $k > 2$.  In the first case
that $0 < k \le 2$, we replace the majorization
inequality~\eqref{eqn:majorization-convex} for $\theta = t \theta_0 + (1 -
t) \theta_1$, where $t \in [0,1]$, with the inequality
\begin{equation*}
  L(\theta, P_0)^{1/2}
  + L(\theta, P_1)^{1/2}
  = \left[(1 - t)^{k/2} + t^{k/2}\right]\ltwo{\theta_0 - \theta_1}^{k/2}
  \ge \ltwo{\theta_0 - \theta_1}^{k/2}.
\end{equation*}
Using $\Delta = \ltwo{\theta_0 - \theta_1}$ and tracing the proof
of Theorem~\ref{theorem:constrained-risk-convex} then gives 
the first inequality~\eqref{eqn:constrained-power-loss}.
For the second inequality, the case $k \in (2, \infty)$,
we may apply the first case that $k \le 2$ and H\"older's
inequality.
Indeed, by the
assumption that $\risk(\what{\theta}, P_0) \le \delta^k$,
we have
\begin{equation*}
  \E_0\left[\ltwos{\what{\theta} - \theta_0}^2\right]
  \le \E_0\left[\ltwos{\what{\theta} - \theta_0}^k\right]^{2/k}
  \le \delta^2.
\end{equation*}
Applying the result for $k = 2$ in the first case of
inequality~\eqref{eqn:constrained-power-loss} yields
\begin{equation*}
  \risk(\what{\theta}, P_1)
  \ge \E_1\left[\ltwos{\what{\theta} - \theta_1}^2\right]^{k/2}
  \geq \hinge{\Delta - (\chipones{P_1}{P_0}\delta^2)^{1/2}}^k.
\end{equation*}
%% as desired.

\subsection{Proof of Lemma~\ref{lemma:tilt-dists}}
\label{sec:proof-tilt-dists}

By the boundedness assumptions
on $\phi'$ and $\phi''$, Taylor's theorem implies that
\begin{equation*}
  |\phi(t) - 1|
  \le \linf{\phi'} |t|
  \le K |t|
  ~~ \mbox{and} ~~
  |\phi(t) - 1 - t|
  \le \half \linf{\phi''} t^2
  \le \half K t^2
\end{equation*}
for all $t \in \R$.
Thus we have
\begin{equation*}
  C_t = \int \phi(t g(z)) dP_0(z)
  = \int (1 + t g(z)) dP_0(z)
  \pm \frac{K}{2} \int t^2 g(z)^2 dP_0(z)
  = 1 \pm \frac{Kt^2}{2} \E_0[g(Z)^2].
\end{equation*}
Let $\sigma^2 = \E_0[g(Z)^2]$ for shorthand.
Considering the $\chi^2$-divergence, we have
$\dchi{P_{t,g}}{P_0} =
\int (\phi(t g(z)) / C_t - 1)^2 dP_0(z)$, and the integrand
has the bound
\begin{equation*}
  \left(\frac{\phi(t g(z))}{C_t} - 1\right)^2
  \le 
  \left(\frac{1 + K |t g(z)|}{1 - K t^2 \sigma^2} - 1\right)^2
  %% =
  %% \frac{K^2 t^2 (|g(z)| + t \sigma^2)^2}{(1 - K t^2 \sigma^2)}
  \le \frac{2 K^2 t^2}{(1 - Kt^2 \sigma^2)^2}
  (g(z)^2 + t^2 \sigma^4),
\end{equation*}
and
\begin{equation*}
  \lim_{t \to 0}
  \frac{1}{t^2}
  \left(\frac{\phi(t g(z))}{C_t} - 1\right)^2
  = \lim_{t \to 0}
  \frac{1}{t^2}
  \left(\frac{1 + t g(z) + O(t^2)}{1 - O(t^2)} - 1\right)^2
  = g(z)^2.
\end{equation*}
Lebesgue's dominated convergence theorem implies that
$\lim_{t \to 0} \frac{1}{t^2} \dchi{P_{t,g}}{P_0} = \E_0[g(Z)^2]$, as desired.

\setlength{\bibsep}{1pt}
\bibliography{bib}
\bibliographystyle{abbrvnat}

\end{document}